\def\Jet{J^\infty}
\def\jet{j^\infty}
\def\F{\mathcal{F}}
\def\dif{\mathrm{d}}
\newcommand\nn{\nonumber}
\newcommand\isempty[3]{{\ifx&#1&#2\else#3\fi}}
\newcommand\term[1]{\emph{#1}}
\newcommand\dvol[1][M]{\mathrm{dVol}(#1)}
\newcommand{\hJet}[1]{\ol{\Jet_{#1}}}
\newcommand{\evder}[2][]{\partial_{#2}^{\ifx&#1&\else(#1)\fi}}
\newcommand{\ol}[1]{\overline{#1}}
\newcommand{\wh}[1]{\widehat{#1}}
\newcommand{\hdif}{\overline{\dif}}
\newcommand{\difv}[2][]{\frac{\delta #1}{\delta #2}} 
\newcommand{\ldifv}[2][]{\frac{\overleftarrow{\delta #1}}{\delta #2}} 
\newcommand{\rdifv}[2][]{\frac{\overrightarrow{\delta #1}}{\delta #2}} 
\newcommand{\lhdifv}[2][]{\overleftarrow{\delta #1}/\delta #2} 
\newcommand{\rhdifv}[2][]{\overrightarrow{\delta #1}/\delta #2} 
\newcommand{\difp}[2][]{\frac{\partial #1}{\partial #2}}
\newcommand{\dtotal}[2][]{D_{#2}\isempty{#1}{}{(#1)}}
\newcommand\schouten[1]{\llbracket #1 \rrbracket}
\begin{document}

\FirstPageHeading{Ringers}

\ShortArticleName{A comparison of definitions for the Schouten bracket on jet spaces} 

\ArticleName{A comparison of definitions for the Schouten bracket on jet spaces}

\Author{Arthemy V. KISELEV and Sietse RINGERS\,\footnote{Corresponding author.}}
\AuthorNameForHeading{A.V. Kiselev and S. Ringers}
\AuthorNameForContents{KISELEV A.V. and RINGERS S.}
\ArticleNameForContents{A comparison of definitions for the Schouten bracket on jet spaces}

\Address{Johann Bernoulli Institute for Mathematics and Computer Science, \\
University of Groningen, P.O. Box 407, 9700 AK Groningen, The Netherlands}
\EmailD{A.V.Kiselev@rug.nl, S.Ringers@rug.nl}

\Abstract{The Schouten bracket (or antibracket) plays a central role in the Poisson formalism and the Batalin-Vilkovisky quantization of gauge systems. There are several (in)equivalent ways to realize this concept on jet spaces. In this paper, we compare the definitions, examining in what ways they agree or disagree and how they relate to the case of usual manifolds.}

\section{Introduction}
The Schouten bracket is a natural generalization of the commutator of vector fields to the fields of multivectors. It was introduced by J.~A.~Schouten~\cite{Ringers:Schouten}, who with A.~Nijenhuis~\cite{Ringers:Nijenhuis} established its main properties. Later it was observed by A.~Lichnerowicz~\cite{Ringers:Lichnerowicz} that the bracket provides a way to check if a bivector $\pi$ on a manifold determines a Poisson bracket via the formula $\schouten{\pi,\pi} = 0$, which was the first intrinsically coordinate-free method to see this and established the use of the bracket in the Poisson formalism.

Historically, the bracket on jet space seems to have been researched in two distinct areas of mathematics and physics, which have been separate for a long time. The first branch is the quantization of gauge systems; here the bracket is known as the \term{antibracket}. It occurs for example in the seminal papers on the BRST and BV formalism, \cite{Ringers:BRST} and~\cite{Ringers:BV} respectively, where it is used to create a nilpotent operator $\schouten{\Omega,\cdot\,}$ providing a resolution of the space of observables. Other occurences of the bracket in this context are~\cite{Ringers:Zinn-Justin}, \cite{Ringers:CF}, \cite{Ringers:HT} and~\cite{Ringers:Witten}, the last of which contains some geometrical interpretation of the bracket.

In the Poisson formalism on jet spaces it was understood in~\cite{Ringers:Gel'fand-Dorfman} that the bracket plays a similar role for recognizing Poisson brackets as on usual manifolds. Concepts such as Hamiltonian operators and the relation of the bracket with the Yang-Baxter equation are developed in~\cite{Ringers:Gel'fand-Dorfman} and~\cite{Ringers:Reyman}; for a review, see~\cite{Ringers:Dorfman}. A version of the bracket that can be restricted to equations was developed in~\cite{Ringers:l*-coverings}. Later, a different, recursive way of defining the bracket, that we will discuss in this paper, was shown in~\cite{Ringers:topical}.

Generalizations to the $\mathbb{Z}_2$-setup and the purely non-commutative setting of the entire theory have been discussed in~\cite{Ringers:Kontsevich}, \cite{Ringers:Olver-Sokolov}, and more recently~\cite{Ringers:ArthemyChristmas}; for a review, see~\cite{Ringers:ArthemyBook}.

The realization that the brackets in these areas of mathematics and physics coincide is not an obvious one. Accordingly, a number of seemingly distinct ways of defining the bracket has been developed, of which the equivalence is not always immediate and sometimes a subtle issue. This paper aims to examine four of those definitions, of which three will turn out to be equivalent when care is taken.

\medskip
The paper is structured as follows. We first recall in section~\ref{Ringers:sec:preliminaries} the notions of horizontal jet spaces and variational multivectors; at this point it will become clear why the definition of the bracket for usual manifold fails in the case of jet spaces. In section~\ref{Ringers:sec:definitions} we first define the Schouten bracket as an odd Poisson bracket; then, after giving some examples of the bracket acting on two multivectors, we show that this definition is equivalent to the recursive one introduced in~\cite{Ringers:topical}. Using the recursive definition we shall prove the Jacobi identity for the bracket, which yields a third definition for the bracket, in terms of graded vector fields and their commutators.

We use the following notation, in most cases matching that from~\cite{Ringers:ArthemyBook}. Let $\pi \colon E \to M$ be a vector bundle of rank $m$ over a smooth real oriented manifold of dimension $n$; in this paper we assume all maps to be smooth. $x^i$ are the coordinates, with indices $i,j,k,\dots$, along the base manifold; $q^\alpha$ are the fiber coordinates with indices $\alpha,\beta,\gamma\dots$. We take the infinite jet space $\pi_\infty \colon \Jet(\pi) \to M$ associated with this bundle; a point from the jet space is then $\theta = (x^i, q^\alpha, q^\alpha_{x^i}, q^\alpha_{x^ix^j},\dots,q^\alpha_\sigma,\dots) \in \Jet(\pi)$, where $\sigma$ is a multi-index. If $s \in \Gamma(\pi)$ is a section of $\pi$ we denote with $\jet(s)$ its infinite jet, which is a section $\jet(s) \in \Gamma(\pi_\infty)$. Its value at $x \in M$ is $\jet_x(s) = (x^i, s^\alpha(x), \difp[s^\alpha]{x^i}(x), \dots,\frac{\partial^{|\sigma|}s^\alpha}{\partial x^\sigma}(x),\dots) \in \Jet(\pi)$.

The evolutionary vector fields, which we will call \emph{vectors}, are then $\evder[q]{\varphi} = \sum_{|\sigma|\geq0}\sum_{\alpha=1}^m D_\sigma(\varphi^\alpha)\difp{q^\alpha_\sigma}$, where $D_\sigma = D_{x^{i_1}}\circ\dots\circ D_{x^{i_k}}$ are (compositions of) the total derivatives. 
Here $\varphi \in \varkappa(\pi) := \Gamma(\pi^*_\infty(\pi)) = \Gamma(\pi)\otimes_{C^\infty(M)}\F(\pi)$, where $\F(\pi)$ is the ring of smooth functions on the jet space. The \emph{covectors} are then $p \in \wh\varkappa(\pi) := \wh{\varkappa(\pi)} := \text{Hom}_{\F(\pi)}(\varkappa(\pi),\ol\Lambda(\pi))$, i.e., linear functions that map vectors to the space of top-level horizontal forms on jet space. We will denote the coupling between covectors and vectors with $\langle p,\varphi \rangle \in \ol\Lambda(\pi)$. The horizontal cohomology, i.e., $\ol\Lambda(\pi)$ modulo the image of the horizontal exterior differential $\hdif$, is denoted by $\ol{H}^n(\pi)$; the equivalence class of $\omega \in \ol\Lambda(\pi)$ is denoted by $\int\omega \in \ol{H}^n(\pi)$. We will assume that the sections are such that integration by parts is allowed and does not result in any boundary terms; for example, the base manifold is compact, or the sections all have compact support, or decay sufficiently fast towards infinity. Lastly, the variational derivative with respect to $q^\alpha$ is $\difv{q^\alpha} = \sum_{|\sigma|\geq0}(-)^\sigma \dtotal\sigma\difp{q^\alpha}$, while the Euler operator is $\delta = \int \dif_{\mathcal{C}}\,\cdot\,$, where $\dif_\mathcal{C}$ is the Cartan differential. 

For a more detailed exposition of these matters, see for example~\cite{Ringers:Olver}, \cite{Ringers:redbook}, \cite{Ringers:ArthemyBook} and~\cite{Ringers:topical}.

\section{Preliminaries}\label{Ringers:sec:preliminaries}
Let $\xi$ be a vector bundle over $\Jet(\pi)$, and suppose $s_1$ and $s_2$ are two sections of this bundle. We say that they are \term{horizontally equivalent} \cite{Ringers:l*-coverings} at a point $\theta \in \Jet(\pi)$ if $\dtotal\sigma(s_1^\alpha) = \dtotal\sigma(s_2^\alpha)$ at $\theta$ for all multi-indices $\sigma$ and fiber-indices $\alpha$. Denote the equivalence class by $[s]_\theta$. The set
\[
	\hJet\pi(\xi) := \{ [s]_\theta \mid s \in \Gamma(\xi), \, \theta \in \Jet(\pi) \}
\]
is called the \term{horizontal jet bundle} of $\xi$. It is clearly a bundle over $\Jet(\pi)$, whose elements above 
 $\theta$ are determined by all the derivatives $s^\alpha_\sigma := D_\sigma(s^\alpha)$ for all multi-indices $\sigma$ and fiber-indices $\alpha$.

Now suppose $\zeta$ is a bundle over $M$. Let us consider the induced vector bundle $\pi^*_\infty(\zeta)$ over $\Jet(\pi)$, and the horizontal jet bundle $\hJet\pi(\pi^*_\infty(\zeta))$.

\begin{proposition}
	As bundles over $\Jet(\pi)$, the horizontal jet bundle $\hJet\pi(\pi_\infty^*(\zeta))$ and $\pi_\infty^*(\Jet(\zeta))$ are equivalent.
\end{proposition}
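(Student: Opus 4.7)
The plan is to exhibit an explicit fibrewise isomorphism $\Phi\colon \hJet{\pi}(\pi_\infty^*(\zeta)) \to \pi_\infty^*(\Jet(\zeta))$ over $\Jet(\pi)$, using the observation that for any section of $\zeta$ pulled back to $\Jet(\pi)$, the total derivatives along the base coincide with ordinary partial derivatives. The two bundles have natural fibre coordinates indexed by the same data (a multi-index $\sigma$ and a fibre-index $\alpha$ of $\zeta$), so once the map is defined it will manifestly be a diffeomorphism in coordinates.

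First I would define $\Phi$ as follows. Given $[s]_\theta$ with $s\in\Gamma(\pi_\infty^*(\zeta))$ and $\theta\in\Jet(\pi)$ lying over $x=\pi_\infty(\theta)\in M$, set
\[
   \Phi([s]_\theta) := \bigl(\theta,\; \{s^\alpha_\sigma(\theta)\}_{\alpha,\sigma}\bigr),
\]
where $s^\alpha_\sigma(\theta) := \dtotal\sigma(s^\alpha)(\theta)$ is read as the $\sigma$-th Taylor coefficient at $x$ of an element of the fibre of $\Jet(\zeta)$ over $x$. Well-definedness is immediate: horizontal equivalence is defined precisely as equality of all the numbers $\dtotal\sigma(s^\alpha)$ at $\theta$, so the value $\Phi([s]_\theta)$ does not depend on the representative $s$.

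Next I would build the inverse. A point of $\pi_\infty^*(\Jet(\zeta))$ over $\theta$ is a pair $(\theta, j^\infty_x(\tilde s))$ with $\tilde s$ a local section of $\zeta$ near $x$. Form its pullback $s:=\pi_\infty^*(\tilde s)\in\Gamma(\pi_\infty^*(\zeta))$; because $s$ depends only on the base coordinates $x^i$ and not on the jet fibre coordinates $q^\alpha_\tau$, the total derivatives reduce to ordinary partials,
\[
   \dtotal\sigma(s^\alpha)(\theta) \;=\; \frac{\partial^{|\sigma|}\tilde s^\alpha}{\partial x^\sigma}(x),
\]
so $[s]_\theta$ depends only on the jet $j^\infty_x(\tilde s)$, not on the chosen representative section. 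Setting $\Psi(\theta, j^\infty_x(\tilde s)) := [\pi_\infty^*(\tilde s)]_\theta$ gives a well-defined map, and the two composites $\Phi\circ\Psi$ and $\Psi\circ\Phi$ are the identity because both sides are characterised by the same family of numbers $\{s^\alpha_\sigma\}$.

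The only subtlety I anticipate — and the main obstacle — is not an algebraic but a set-theoretic one: verifying that every horizontal equivalence class above $\theta$ is hit by some pullback section, so that $\Psi$ is defined on all of $\pi_\infty^*(\Jet(\zeta))$ and $\Phi$ is surjective. This reduces to the standard fact that every formal jet of $\zeta$ at $x$ is realised by an honest smooth section of $\zeta$ near $x$ (e.g.\ by a Borel-type construction with a cutoff function), and hence by its pullback to $\Jet(\pi)$. Once this is in hand, smoothness of $\Phi$ and $\Psi$ with respect to the obvious trivialisations is transparent, since in fibre coordinates both maps act as the identity on the data $\{s^\alpha_\sigma\}$, and the bundle isomorphism over $\Jet(\pi)$ follows.
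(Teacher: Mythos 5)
Your proposal is correct and lands on the same identification as the paper: both maps send a horizontal equivalence class $[s]_\theta$ to the point of $\pi_\infty^*(\Jet(\zeta))$ whose Taylor data are the numbers $D_\sigma(s^\alpha)(\theta)$, and both rest on the observation that total derivatives become ordinary partial derivatives once sections of the two bundles are suitably related. The difference lies in how that relation between sections is realized. The paper invokes Borel's theorem to write $\theta = \jet_x(q)$ for an actual section $q\in\Gamma(\pi)$ and then restricts $s\in\Gamma(\pi_\infty^*(\zeta))$ to the graph of $\jet(q)$, producing an honest section $u := s\circ\jet(q)$ of $\zeta$ whose partial derivatives at $x$ equal the total derivatives of $s$ at $\theta$; bijectivity then follows because both equivalence classes are determined by the same derivative data. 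You instead define the forward map purely in fibre coordinates and build the inverse by pulling a section $\tilde s$ of $\zeta$ back to the fibrewise-constant section $\pi_\infty^*(\tilde s)$ over $\Jet(\pi)$; this buys you an explicit two-sided inverse without ever having to realize $\theta$ as the jet of a section of $\pi$. The one point to tighten: Borel's theorem is needed not (as you place it) for the surjectivity of $\Phi$ or the totality of $\Psi$ --- $\Psi$ is automatically everywhere defined since points of $\Jet(\zeta)$ are by definition jets of actual sections, and surjectivity of $\Phi$ follows from $\Phi\circ\Psi=\mathrm{id}$ --- but already for $\Phi$ to be well defined as a map \emph{into} $\pi_\infty^*(\Jet(\zeta))$: one must know that the string of numbers $\{D_\sigma(s^\alpha)(\theta)\}$ is the Taylor data of some genuine section of $\zeta$ near $x$. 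With that remark relocated, your argument is complete and, if anything, makes the inverse more transparent than the paper's.
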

\begin{proof}
	The pullback bundle $\pi_\infty^*(\Jet(\zeta))$ is as a set equal to
	\begin{align*}
		\pi_\infty^*(\Jet(\zeta)) 
		= \{ (j_x^\infty(q),j_x^\infty(u)) \in \Jet(\pi)\times\Jet(\zeta) \mid x \in M \}.
	\end{align*}
	On the other hand, consider an element $[s]_\theta \in \hJet\pi(\pi_\infty^*(\zeta))$. Thus, $s$ is a section $s \in \Gamma(\pi_\infty^*(\zeta))$. By Borel's theorem, an arbitrary element over $x \in M$ from $\Jet(\pi)$ can be written as $j_x^\infty(q)$ for some $q \in \Gamma(\pi)$. Now define a section $u \in \Gamma(\zeta)$ by $u := \jet(q)^*s = s \circ j^\infty(q)$, i.e., $u(x) = s(j_x^\infty(q))$. Then by the definition of the total derivative, we have
	\begin{align*}
		\difp[u^a]{x^i}(x) = \difp{x^i}\big(s^a\circ \jet(q)\big)(x) = \big(D_{x^i}s^a\big)\big(\jet_x(q)\big),
	\end{align*}
	that is, the partial derivatives of $u$ and the total derivatives of $s$ coincide. This shows that if we define a map by
	\begin{align*}
		[s]_{\jet_x(q)} \mapsto (\jet_x(q), \jet_x(u)) \in \pi_\infty^*(\Jet(\zeta)),
	\end{align*}
	where $u$ is the section associated to $s$ and $q$ as outlined above, then this map is well-defined and smooth. Moreover, since the partial derivatives of a section $u$ at $x$ and the total derivatives of a section $s$ at $\jet_x(q)$ completely define the equivalence classes $\jet_x(u)$ and $[s]_{\jet_x(q)}$ respectively, this map is also a bijection. Lastly, it is clear that as a bundle morphism over $\Jet(\pi)$, it preserves fibers.
\end{proof}
When  $\zeta$ is a bundle over $M$ instead of over $\Jet(\pi)$, and there is no confusion possible, we will abbreviate $\hJet\pi(\pi^*_\infty(\zeta))$ with $\hJet\pi(\zeta)$.

This identification endows the horizontal jet space $\hJet\pi(\zeta)$ with the Cartan connection -- namely the pullback connection on $\pi_\infty^*(\Jet(\zeta))$. Therefore there exist total derivatives $\dtotal{i}$ on the horizontal jet space $\hJet\pi(\zeta)$; in coordinates these are just, denoting the fiber coordinate of $\zeta$ with $u$, the operators
\begin{align*}
	\dtotal{i} = \difp{x^i} + \sum_{\alpha,\sigma}q^\alpha_{\sigma+1_i}\difp{q^\alpha_\sigma} + \sum_{\beta,\tau}u^\beta_{\tau+1_i}\difp{u^\beta_\tau}.
\end{align*}
Thus, instead of the horizontal derivatives $\dtotal[u^\alpha]{\sigma}$ of sections there are now the fiber coordinates $u^\alpha_\sigma$, which have no derivatives along the fiber coordinates: $\difp{q^\alpha_\sigma}u^\beta_\tau = 0$.

Now consider the bundle $\wh\pi: E^*\otimes\Lambda^n(M) \to M$. Then $\pi^*_\infty(\wh\pi) = \pi^*_\infty(E^*) \otimes \ol\Lambda(\pi)$, so that $\wh\varkappa(\pi) = \Gamma(\pi^*_\infty(\wh\pi))$. Thus, the formalism described above is applicable to covectors, so we either take $p$ to be an element from $\wh\varkappa(\pi)$, an actual covector, or $p \in \hJet\pi(\wh\pi)$.

At this point we take the fibers of the bundle $\wh\pi$ and of $\pi^*_\infty(\wh\pi)$, and reverse their parity, $\Pi \colon p \mapsto b$, while we keep the entire underlying jet space intact~\cite{Ringers:Voronov}. The result is the horizontal jet space $\hJet\pi(\Pi\wh\pi)$ with odd fibers over $x$. An element $\theta$ from this space has coordinates
\[
	\theta = (x^i, q^\alpha, q^\alpha_{x^i}, \dots, q^\alpha_\sigma, \dots; b_\alpha, b_{\alpha,x^i}, \dots, b_{\alpha,\sigma}, \dots).
\]
The coupling $\langle p,\varphi\rangle = \sum_\alpha p_\alpha\varphi^\alpha\,\dvol$ extends tautologically to the odd $b$'s, as do the total derivatives: $D_\sigma b_\alpha = b_{\alpha,\sigma}$.

\begin{definition}
	Let $k \in \mathbb{N}\cup \{0\}$. A \term{variational $k$-vector}, or a \term{variational multivector}, is an element of $\ol{H}^n(\pi_\infty^*(\Pi\wh\pi))$, having a density that is $k$-linear in the odd $b$'s or their derivatives (i.e., it is a homogeneous polynomials of degree $k$ in $b_{\alpha,\sigma}$). If $\xi$ is a $k$-vector we will call $k =: \deg(\xi)$ its \emph{degree}. Note that by partial integration, any such $k$-vector $\xi$ can be written as
	\[
		\xi(b) = \int \langle b, A(b,\dots,b)\rangle
	\]
	for some total totally skew-symmetric total differential operator $A$ that takes $k-1$ arguments, takes values in $\varkappa(\pi)$, and is skew-adjoint in each of its arguments (e.g., in the case of a 2-vector, $\int\langle b^1, A(b^2)\rangle = \int\langle b^2,A(b^1)\rangle$).
\end{definition}

Note that this does not imply that \emph{every} density is, or has to be, a homogeneous polynomial of degree $k$; for example, $\int bb_x\,\dvol = \int (bb_x + D_x(bb_xb_{xx}))\,\dvol$.

To \term{evaluate} such a $k$-vector on $k$ covectors $p^1,\dots,p^k$, we proceed as follows: we put each covector in each possible slot, keeping track of the minus sign associated to the permutation, and normalize by the volume of the symmetric group:
\begin{align}\label{Ringers:eq:evaluation}
	\xi(p^1,\dots,p^k) = \frac1{k!}\sum_{s\in S_k}(-)^s\,\xi(p^{s(1)},\dots,p^{s(k)}),
\end{align}
i.e., in the coordinate expression of (the representative of) $\xi$ we replace the $i$-th $b$ that we come across with $p^{s(i)}$ (moving from left to right), and sum over all permutations $s \in S_k$. Thus, under this evaluation $k$-vectors are $k$-linear total differential skew-symmetric functions on $k$ covectors, landing in the horizontal cohomology of the jet space.

\begin{remark}
	Contrary to the case of usual manifolds $M$, where the space of $k$-vectors is isomorphic to $\bigwedge^kTM$, the space of \emph{variational} $k$-vectors does \emph{not} split in such a fashion. As a result, the two formulas
	\[
		\llbracket X, Y\wedge Z\rrbracket = \llbracket X,Y\rrbracket\wedge Z + (-)^{(\deg(X)-1)\deg(Y)}Y\wedge\llbracket X,Z\rrbracket
	\]
	for multivectors $X$, $Y$ and $Z$, and
	\begin{align}\label{Ringers:eq:bi-derivation}
		\llbracket&X_1\wedge\cdots\wedge X_k, Y_1\wedge\cdots\wedge Y_\ell \rrbracket \nn\\
		 &= \sum_{\substack{i\leq i \leq k\\1\leq j\leq \ell}}(-)^{i+j}
		   [X_i,Y_j]\wedge X_1\wedge\cdots\wedge\widehat{X}_i\wedge\cdots\wedge X_k
		   \wedge Y_1\wedge\cdots\wedge \widehat{Y}_j\wedge\cdots\wedge Y_\ell
	\end{align}
	for vector fields $X_i$ and $Y_j$, no longer hold. Both of these formulas provide a way of defining the bracket on usual, smooth manifolds (together with $\schouten{X,f} = X(f)$ for vector fields $X$ and functions $f \in C^\infty(M)$, and $\schouten{X,Y} = [X,Y]$ for vector fields $X$ and $Y$).
	
	To sketch an argument why the space of variational $k$-vectors does not split in this way, take for example a 0-vector $\omega = \int f\,\dvol$ and a 1-vector, which we can write as $\eta = \int\langle b,\varphi\rangle$ for some $\varphi \in \varkappa(\pi)$. How would we define the wedge product $\omega\wedge\eta$ ? Both of the factors contain a volume form and if we just put them together using the wedge product we get 0, so this approach does not work.

	Suppose then we set in this case $\omega\wedge\eta = \int f\langle b,\varphi\rangle$. Now the problem is that $f$ is not uniquely determined by $\omega$ and $\varphi$ is not uniquely determined by $\eta$; both are fixed only up to $\hdif$-exact terms. For example, $\omega = \int f\,\dvol = \int\left(f+\dtotal[g]{i}\right)\dvol$, but $\int f\langle b,\varphi\rangle \ne \int f\langle b,\varphi\rangle + \int\dtotal[g]{i}\langle b,\varphi\rangle$, because the second term is in general not identically zero. 

	Similarly, we have $\eta = \int\langle b,\varphi\rangle = \int\left(\langle b,\varphi\rangle + \hdif(\alpha(b))\right)$, for any linear map $\alpha$ mapping $b$ into $(n-1)$-forms. In the same way as above, this trivial term stops being trivial whenever we multiply it on the left with the density of a 0-vector, say. The difficulty persists for multivectors of any degree $k$ and so there is no reasonable wedge product or splitting.
\end{remark}
\vspace{2cm}

\section{Definitions of the bracket}\label{Ringers:sec:definitions}
\subsection{Odd Poisson bracket}
\begin{definition}\label{Ringers:def:bracket}
	Let $\xi$ and $\eta$ be $k$ and $\ell$-vectors respectively. The \term{variational Schouten bracket} $\schouten{\xi,\eta}$ of $\xi$ and $\eta$ is the $(k+\ell-1)$-vector defined by%
	\footnote{
		To be precise, if $\xi = \int f(b,\dots,b)\,\dvol$ and $\eta = \int g(b,\dots,b)\,\dvol$, where $f$ and $g$ are both homogeneous polynomials in $b_{\alpha,\sigma}$ of degree $k$ and $\ell$ respectively, then the bracket is given by
		\begin{align*}
			\schouten{\xi,\eta}
			&= \int\sum_{\alpha} \left(\rdifv[f]{q^\alpha}\ldifv[g]{b_\alpha}
					 - \rdifv[f]{b_\alpha}\ldifv[g]{q^\alpha}\right)\dvol,
		\end{align*}
		which does not depend on the representatives $f$ and $g$ because $\delta\circ\hdif=0$. This notation, although correct, does not seem to be used in the literature.
	}
	\begin{align}\label{Ringers:eq:schouten}
		\schouten{\xi,\eta}
		&= \int\sum_{\alpha}\left[\rdifv[\xi]{q^\alpha}\ldifv[\eta]{b_\alpha}
				   - \rdifv[\xi]{b_\alpha}\ldifv[\eta]{q^\alpha}\right]
	\end{align}
	in which one easily recognizes a Poisson bracket. Since there are now the anticommuting coordinates $b_\alpha$, we indicate with the arrows above the variational derivatives whether we mean a left or a right derivative (i.e., if we push the variation $\delta b_\alpha$ or $\delta q^\alpha$ through to the left or to the right). The fact that this is a $(k+\ell-1)$-vector comes from the variational derivatives $\difv{b}$ occuring in the expression: if $\eta$ takes $k$ arguments then $\difv[\eta]{b}$ takes $k - 1$ arguments.
\end{definition}

We will use the following two lemmas to calculate examples~\ref{Ringers:example-0-1} through~\ref{Ringers:example-2-2}.

\begin{lemma}\label{Ringers:thm:variational-b-derivative}
	Suppose $\xi = \langle b, A(b,\dots,b) \rangle$ is a $k$-vector. Then
	\begin{align}
		\ldifv[\xi]{b_\alpha} = kA(b,\dots,b)^\alpha.
	\end{align}
\end{lemma}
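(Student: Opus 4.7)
The natural plan is to compute the variation $\delta\xi$ in two ways and read off the left variational derivative from the coefficient of $\delta b_\alpha$. By definition, after permissible integration by parts one has $\delta\xi = \int\sum_\alpha \delta b_\alpha \cdot \ldifv[\xi]{b_\alpha}\,\dvol$. On the other hand, applying the graded Leibniz rule directly to $\xi = \int\langle b, A(b,\dots,b)\rangle$ produces $k$ terms, one from each occurrence of $b$: one from the explicit outer factor on the left of the pairing, and $k-1$ from the arguments of $A$.

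The contribution from varying the explicit outer $b$ is immediate, namely $\int\langle \delta b, A(b,\dots,b)\rangle = \int\sum_\alpha \delta b_\alpha A(b,\dots,b)^\alpha\,\dvol$, and already isolates $\delta b_\alpha$ on the left in the required form. For each of the remaining $k-1$ terms, corresponding to $b$ occupying the $i$-th argument of $A$, the idea is to invoke the hypothesis that $A$ is skew-adjoint in that slot: this lets me integrate by parts to transfer $A$ off $\delta b$ and onto the outer $b$, at the cost of a sign. The now-freed $\delta b$ must then be commuted past several odd factors to reach the leftmost position, producing another sign, and the remaining arguments of $A$ must be reshuffled back into the standard order by total skew-symmetry (harmless, since those arguments all equal $b$). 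A direct sign count shows that the skew-adjointness sign and the parity sign for moving $\delta b$ to the left exactly cancel, so each of these $k-1$ contributions equals $\int\sum_\alpha \delta b_\alpha A(b,\dots,b)^\alpha\,\dvol$ as well.

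Summing the $k$ identical contributions gives $\delta\xi = k\int\sum_\alpha \delta b_\alpha A(b,\dots,b)^\alpha\,\dvol$, and comparing with the defining property of the left variational derivative yields $\ldifv[\xi]{b_\alpha} = k\,A(b,\dots,b)^\alpha$, as required.

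The main obstacle is the sign bookkeeping: the $b$'s are odd, so left versus right derivatives differ in sign according to how many odd factors one commutes past, and integration by parts for total derivatives introduces the usual $(-)^{|\tau|}$ factors alongside the skew-adjointness sign. The cleanest sanity check is to execute the case $k=2$ by hand, writing $\xi = \int \sum_{\alpha,\beta} b_\alpha A^{\alpha\beta}(b_\beta)\,\dvol$ and using $(A^{\alpha\beta})^* = -A^{\beta\alpha}$; there one sees explicitly that the $(-)^{|\tau|}$ from integration by parts, the extra $-1$ from commuting odd $\delta b$ past odd $b$, and the sign in skew-adjointness combine to make the inner contribution reproduce the outer one, so that the two combine to $2\,A(b)^\alpha$. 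The general case then follows the same template slot by slot.
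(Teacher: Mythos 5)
Your argument is correct and is essentially the proof given in the paper: both expand the variation into $k$ terms via the graded Leibniz rule and then use the skew-adjointness of $A$ together with the anticommutation of the odd $\delta b$ to show that each of the $k-1$ ``inner'' contributions equals the outer one $\int\langle\delta b, A(b,\dots,b)\rangle$, the two signs cancelling at each step. The only cosmetic difference is the order of operations (the paper first moves $\delta b$ into the first slot of $A$ by total skew-symmetry and then applies skew-adjointness there, whereas you apply skew-adjointness in the $i$-th slot directly and reshuffle afterwards), which changes nothing of substance.
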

\begin{proof}
	We calculate
	\begin{align*}
		\delta b_\alpha\ldifv[\xi]{b_\alpha}
		&= \overleftarrow{\delta_b}\xi 
		 = \overleftarrow{\delta_b}\langle b, A(b,\dots,b) \rangle \\
		&= \langle \delta b, A(b,\dots,b) \rangle
			+ \sum_{n=1}^{k-1} \langle b, A(b,\dots,\delta b, \dots,b) \rangle \\
	\intertext{Now $\delta b$ anticommutes with the $b$ left to it, and $A$ is antisymmetric in all of its arguments, so we can switch $\delta b$ with the $b$ on its left, giving two cancelling minus signs. Doing this multiple times, we obtain}
		&= \langle \delta b, A(b,\dots,b) \rangle
			+ \sum_{j=1}^{k-1} \langle b, A(\delta b, b, \dots,b) \rangle \\
	\intertext{Next we first switch $\delta b$ with the $b$ to its left, and then use the fact that $A$ is skew-symmetric in its first argument, again giving two cancelling minus signs:}
		&= \langle \delta b, A(b,\dots,b) \rangle
			+ (k-1) \langle \delta b, A(b, \dots,b) \rangle \\
		&= k\langle \delta b, A(b,\dots,b)\rangle \\
		&= k\,\delta b_\alpha A(b,\dots,b)^\alpha.
	\end{align*}
	The result follows by comparing the coefficients of $\delta b^\alpha$.
\end{proof}

\begin{lemma}
	Let $\xi$ and $\eta$ be $k$ and $\ell$-vectors respectively, so that $\xi = \int\langle b, A(b,\dots,b)\rangle$ and $\eta = \int\langle b, B(b,\dots,b)\rangle$ respectively. Then
	\begin{align}
		\llbracket \xi, \eta \rrbracket = \int\left[(-)^{k(\ell-1)}\ell\evder[q]{B(b,\dots,b)}\xi - (-)^{k-1}k\evder[q]{A(b,\dots,b)}\eta\right]
	\end{align}
\end{lemma}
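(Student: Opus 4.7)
The plan is to unfold the defining formula~(3.5) term by term, use Lemma~3.2 on each of the two $b$-variational derivatives to replace them by the characteristics $A$ and $B$, and then recognize what remains as two evolutionary actions via the integration-by-parts identity
\[
\evder[q]{\varphi}\zeta \;=\; \int \sum_\alpha \varphi^\alpha\,\difv[\zeta]{q^\alpha},
\]
which holds for any functional $\zeta$ and any (possibly odd) characteristic $\varphi$: it follows by integrating the total derivatives $D_\sigma$ off $\varphi^\alpha$ in $\evder[q]{\varphi}\zeta = \int\sum_{\sigma,\alpha}D_\sigma(\varphi^\alpha)\difp[\zeta]{q^\alpha_\sigma}$. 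Note that in this formula $\varphi^\alpha$ sits on the \emph{left} of the $q$-variational derivative; this placement is where the only genuine supercommutation sign will appear later.

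For the first summand of $\schouten{\xi,\eta}$, the right derivative $\rdifv[\xi]{q^\alpha}$ coincides with $\difv[\xi]{q^\alpha}$ because $q^\alpha$ is even, and Lemma~3.2 yields $\ldifv[\eta]{b_\alpha}=\ell\,B^\alpha(b,\dots,b)$. The summand therefore reads $\ell\int\sum_\alpha \difv[\xi]{q^\alpha}\,B^\alpha(b,\dots,b)$, with the vector $B^\alpha$ on the \emph{right}. To identify this with $\evder[q]{B(b,\dots,b)}\xi$ I will push $B^\alpha$ past $\difv[\xi]{q^\alpha}$; since $B(b,\dots,b)$ has parity $\ell-1$ in the odd $b$'s and $\difv[\xi]{q^\alpha}$ retains the parity $k$ of $\xi$ (the $q$-derivative being even), this supercommutation produces exactly the factor $(-)^{k(\ell-1)}$, giving the first term of the claimed identity.

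For the second summand, $\ldifv[\eta]{q^\alpha}=\difv[\eta]{q^\alpha}$, while $\rdifv[\xi]{b_\alpha}$ is converted to a left derivative by the standard super-rule for a parity-$k$ functional differentiated in an odd coordinate, $\rdifv[\xi]{b_\alpha}=(-)^{k-1}\ldifv[\xi]{b_\alpha}$. Applying Lemma~3.2 once more gives $\rdifv[\xi]{b_\alpha}=(-)^{k-1}k\,A^\alpha(b,\dots,b)$. This time $A^\alpha$ is already to the \emph{left} of $\difv[\eta]{q^\alpha}$, so no further supercommutation is needed and the summand contributes precisely $-(-)^{k-1}k\,\evder[q]{A(b,\dots,b)}\eta$; adding the two pieces yields the stated formula.

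The only genuine obstacle is sign bookkeeping. The two signs that must be tracked with care are $(-)^{k-1}$, coming from converting the right derivative in the odd $b_\alpha$ to a left derivative, and $(-)^{k(\ell-1)}$, coming from sliding the odd-parity characteristic $B(b,\dots,b)$ past the $q$-variational derivative of $\xi$ so as to recognize the evolutionary derivative in its standard form. Once these are handled, the argument is a direct substitution from Lemma~3.2 together with one use of integration by parts implicit in the evolutionary-derivative identity.
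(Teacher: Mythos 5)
Your proposal is correct and follows essentially the same route as the paper: reverse the arrow on the $b$-derivative in the second term for the sign $(-)^{k-1}$, commute the two factors in the first term for the sign $(-)^{k(\ell-1)}$ (you count parities of $B(b,\dots,b)$ and $\difv[\xi]{q^\alpha}$ where the paper counts the $\ell-1$ and $k$ odd $b$'s they carry, which is the same computation), apply Lemma~\ref{Ringers:thm:variational-b-derivative}, and integrate by parts to recognize the evolutionary derivative. The only difference is the order in which you apply the lemma versus the commutation, which is immaterial.
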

\begin{proof}
	In the second term of the definition of the Schouten bracket, we first reverse the arrow on the $b$-derivative, giving a sign $(-)^{k-1}$. In the first term, we swap the two factors $(\rhdifv[\xi]{q^\alpha})(\lhdifv[\eta]{b_\alpha})$. For this we have to move the $\ell-1$ $b$'s of $\lhdifv[\eta]{b_\alpha}$ through the $k$ $b$'s of $\rhdifv[\xi]{q^\alpha}$, giving a sign $(-)^{k(\ell-1)}$. Thus
	\begin{align*}
	\schouten{\xi,\eta}
		&= \int \left[ \rdifv[\xi]{q^\alpha}\ldifv[\eta]{b_\alpha} - \rdifv[\xi]{b_\alpha}\ldifv[\eta]{q^\alpha} \right] \\
		&= \int \left[(-)^{k(\ell-1)}\ldifv[\eta]{b_\alpha}\rdifv[\xi]{q^\alpha} - (-)^{k-1}\ldifv[\xi]{b_\alpha}\ldifv[\eta]{q^\alpha} \right] \\
		&= \int \left[(-)^{k(\ell-1)}\ell\dtotal\sigma B(b)^\alpha \difp[\xi]{q^\alpha_\sigma} - (-)^{k-1}k\dtotal\sigma A(b)^\alpha\difp[\eta]{q^\alpha_\sigma} \right] \\
		&= \int\left[(-)^{k(\ell-1)}\ell\evder[q]{B(b)}\xi - (-)^{k-1}k\evder[q]{A(b)}\eta\right].
	\end{align*}
\end{proof}

\begin{example}\label{Ringers:example-0-1}
	Take a one-vector $\varphi \in \varkappa(\pi)$, i.e., $\xi = \langle b, \varphi \rangle$, and let $\mathcal{H} \in \ol{H}^n(\pi)$ be a 0-vector. Then
	\begin{align*}
		\llbracket \mathcal{H}, \varphi \rrbracket = \int\evder[q]{\varphi}\mathcal{H},
	\end{align*}
	i.e., the Schouten bracket calculates the velocity of $\mathcal{H}$ along $\evder[q]\varphi$.
\end{example}

\begin{example}\label{Ringers:example-1-1}
	Suppose $\xi$ and $\eta$ are two one-vectors, i.e., $\xi = \int\langle b, \varphi_1\rangle$ and $\eta = \int\langle b, \varphi_2\rangle$ for some $\varphi_1,\varphi_2 \in \varkappa(\pi)$. Then
	\begin{align*}
		\schouten{\xi,\eta}
		&= \int\left(\evder[q]{\varphi_2}(\xi) - \evder[q]{\varphi_1}(\eta)\right)
		 = \int\left(\evder[q]{\varphi_2}\langle b, \varphi_1\rangle - \evder[q]{\varphi_1}\langle b, \varphi_2\rangle\right) \\
		&= \int\left(\langle b, \evder[q]{\varphi_2}\varphi_1\rangle - \langle b, \evder[q]{\varphi_1}\varphi_2\rangle\right)
	\intertext{which holds because $b$ does not depend on the jet coordinates $q^\alpha$, whence}
		 &= \int\langle b, [\varphi_2,\varphi_1]\rangle
		  = -\int\langle b, [\varphi_1,\varphi_2]\rangle
	\end{align*}
	Thus, in this case the variational Schouten bracket just calculates the ordinary commutator of evolutionary vector fields, up to a minus sign (c.f. equation~\eqref{Ringers:eq:bi-derivation}).
\end{example}

\begin{example}\label{Ringers:example-2-1}
	Suppose the base and fiber are both $\mathbb{R}$, and let $\xi = \int bb_x\,\dif x$ be a (nontrivial) two-vector and $\eta = \int bx^3q_{xx}\,\dif x$ be a one-vector. Then
	\begin{align*}
		\schouten{\xi,\eta}
		 = 0 + \int 2\evder[q]{b_x}(bx^3q_{xx})\,\dif x
		 = 2\int D_x^2(b_x)bx^3\,\dif x
		 = 2\int x^3b_{xxx}b\,\dif x.
	\end{align*}
	We shall return to this example on p.~\pageref{Ringers:example-recursive} (see Example~\ref{Ringers:example-recursive}).
\end{example}

\begin{example}\label{Ringers:example-2-2}
	In this final example, let $\xi = \int bb_x\,\dif x$ again and $\eta = \int q_xbb_x\,\dif x$; then
	\begin{align*}
		\schouten{\xi,\eta}
		&= 0 + 2\int\evder[q]{b_x}(q_xbb_{x})\,\dif x
		 = 2\int D_x(b_x)\cdot bb_x\,\dif x
		 = 2\int bb_xb_{xx}\,\dif x.
	\end{align*}
	Notice the factor 2 standing in front of the answers in the last two examples; it will become important in the next section.
\end{example}

\subsection{A recursive definition}
The second way of defining the bracket, due to I.~Krasil'shchik and A.~Verbovetsky~\cite{Ringers:topical}, is done in terms of the \term{insertion operator}: let $\xi$ be a $k$-vector, and let $p \in \wh\varkappa(\pi)$ or $p \in \hJet\pi(\wh\pi)$ (i.e., $p$ can be either an actual covector or an element from the corresponding horizontal jet space). Denote by $\xi(p)$ or $\iota_p(\xi)$ 
 the $(k-1)$-vector that one obtains by putting $p$ in the rightmost slot of $\xi$:
\begin{align}\label{Ringers:eq:interior-product}
	\xi(p)(b)
	&= \iota_p(\xi)(b)
	 = \xi(\underbrace{b,\dots,b}_{k-1},p) 
	 = \frac1k\sum_{j=1}^k (-)^{k-j}\xi(b,\dots,b,p,b,\dots,b),
\end{align}
where $p$ is in the $j$-th slot. Note that if we were to insert $k-1$ additional elements of $\wh\varkappa(\pi)$ in this expression in this way, we recover formula~\eqref{Ringers:eq:evaluation}.

\begin{lemma}\label{Ringers:thm:interaction-with-vardif}
	If $\xi = \int\langle b, A(b,\dots,b)\rangle$ is a $k$-vector, then
	\begin{align}
		\frac{\overleftarrow{\delta\xi}(p)}{\delta b_\alpha} = \frac{k-1}{k}\ldifv[\xi]{b_\alpha}(p)
		\quad\text{and}\quad
		\frac{\overrightarrow{\delta\xi}(p)}{\delta b_\alpha} = -\frac{k-1}{k}\rdifv[\xi]{b_\alpha}(p).
	\end{align}
\end{lemma}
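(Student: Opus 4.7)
The plan is to bring $\xi(p)$ into the standard $(k-1)$-vector form and then apply Lemma~\ref{Ringers:thm:variational-b-derivative} on both sides of the claimed identity. Concretely, denote by $\xi_j$ the term in the expansion~(\ref{Ringers:eq:interior-product}) obtained by substituting $p$ into the $j$-th $b$-slot of $\xi=\int\langle b,A(b,\dots,b)\rangle$. For $1<j\le k$, the total skew-symmetry of $A$ moves $p$ through $k-j$ transpositions to the rightmost slot inside $A$, giving $\xi_j=(-)^{k-j}\xi_k$. The remaining case $j=1$ uses the skew-adjoint property of $A$: the functional $F(x_0,x_1,\dots,x_{k-1}):=\int\langle x_0,A(x_1,\dots,x_{k-1})\rangle$ is super-antisymmetric in all of its arguments, and exchanging the even $p$ with a neighbouring odd $b$ supplies the single extra minus sign needed to conclude $\xi_1=(-)^{k-1}\xi_k$. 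The $(-)^{k-j}$ weights in~(\ref{Ringers:eq:interior-product}) then square off, so the $k$ identical contributions collapse to
\[
\xi(p)=\xi_k=\int\langle b,A(b,\dots,b,p)\rangle.
\]

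Having this representative, the first identity is immediate. Applying Lemma~\ref{Ringers:thm:variational-b-derivative} to $\xi(p)$ with the operator $\tilde A(b,\dots,b):=A(b,\dots,b,p)$ (which inherits total skew-symmetry from $A$ in its first $k-2$ arguments, since $p$ sits in the fixed last slot) yields $\ldifv[\xi(p)]{b_\alpha}=(k-1)A(b,\dots,b,p)^\alpha$. For the right-hand side, Lemma~\ref{Ringers:thm:variational-b-derivative} applied to $\xi$ gives $\ldifv[\xi]{b_\alpha}=kA(b,\dots,b)^\alpha$; expanding the insertion of $p$ into this $(k-1)$-linear-in-$b$ expression via~(\ref{Ringers:eq:interior-product}) produces $k-1$ terms which once more collapse by skew-symmetry of $A$ to $A(b,\dots,b,p)^\alpha$. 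Hence $\ldifv[\xi]{b_\alpha}(p)=kA(b,\dots,b,p)^\alpha$, and multiplication by $(k-1)/k$ reproduces the LHS.

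For the right-derivative identity I would invoke the general parity rule $\rdifv[\zeta]{b_\alpha}=(-)^{|\zeta|-1}\ldifv[\zeta]{b_\alpha}$, which follows by anticommuting $\delta b_\alpha$ past the remaining odd $b$'s in a density of parity $|\zeta|$. Applying this to both $\xi$ (parity $k$) and $\xi(p)$ (parity $k-1$) and substituting into the first identity produces the overall sign $(-)^{(k-2)+(k-1)}=-1$ in front of $\frac{k-1}{k}\rdifv[\xi]{b_\alpha}(p)$, as required. The main obstacle is the sign bookkeeping in the first step, and especially the mixed-parity exchange for $j=1$: the even $p$ and odd $b$'s behave differently under the super-antisymmetry of $F$, and a miscounted parity there would wreck the identification of $\xi(p)$ with $\int\langle b,A(b,\dots,b,p)\rangle$ on which the rest of the argument rests.
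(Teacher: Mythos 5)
Your proof is correct and follows essentially the same route as the paper's: apply Lemma~\ref{Ringers:thm:variational-b-derivative} to the $(k-1)$-vector $\xi(p)$ and to $\xi$ itself, compare the two results to obtain the factor $\frac{k-1}{k}$, and derive the second identity by reversing arrows, where the parity mismatch between $\xi(p)$ and $\xi$ produces the extra minus sign. The only difference is that you spell out the identification $\xi(p)=\int\langle b,A(b,\dots,b,p)\rangle$ and the sign count $(-)^{(k-2)+(k-1)}=-1$, both of which the paper leaves implicit.
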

\begin{proof}
	$\xi(p)$ is a $(k-1)$-vector, so $\overleftarrow{\delta\xi}(p)/\delta b_\alpha = (k-1)A(b,\dots,b,p)^\alpha$ by Lemma~\ref{Ringers:thm:variational-b-derivative}. However, $\xi$ is a $k$-vector, so $(\lhdifv[\xi]{b_\alpha})(p) = (kA(b,\dots,b)^\alpha)(p) = kA(b,\dots,b,p)^\alpha$, from which the first equality of the Lemma follows. The second equality is established by reversing the arrow of the derivative, using the first equality, and restoring the arrow to its original direction again; this results in the extra minus sign in this equality.
\end{proof}

On the other hand, if $p \in \hJet\pi(\wh\pi)$ then $\delta\xi(p)/\delta q^\alpha = (\delta\xi/\delta q^\alpha)(p)$. Indeed, we have $\partial p_{\beta,\tau} / \partial q^\alpha_\sigma = 0$, and if $f$ is one of the densities of a $k$-vector, then the total derivative $D_{x^i}$ and the insertion operator $\iota_p$ commute. For example,
\begin{align*}
	&\iota_p(D_{x^i}b_{\alpha,\sigma}) = \iota_p(b_{\alpha,\sigma+1_i}) = p_{\alpha,\sigma+1_i}
\end{align*}
and
\begin{align*}
	&D_{x^i}(\iota_p(b_{\alpha,\sigma})) = D_{x^i}(p_{\alpha,\sigma}) = p_{\alpha,\sigma+1_i}.
\end{align*}
Thus, from the formula $\difv{q^\alpha} = \sum_{|\sigma|>0}(-)^\sigma D_\sigma\difp{q^\alpha_\sigma}$ for the variational derivative it follows that $\delta\xi(p)/\delta q^\alpha = (\delta\xi/\delta q^\alpha)(p)$.

\begin{theorem}\label{Ringers:thm:recursive}
	Let $\xi$ and $\eta$ be $k$ and $\ell$-vectors, respectively, and $p \in \hJet\pi(\wh{\pi})$. Then
	\begin{align}\label{Ringers:eq:recursive}
		\llbracket \xi, \eta \rrbracket(p)
		&= \frac{\ell}{k+\ell-1}\llbracket \xi, \eta(p) \rrbracket + (-)^{\ell-1}\frac{k}{k+\ell-1}\llbracket \xi(p), \eta \rrbracket.
	\end{align}
\end{theorem}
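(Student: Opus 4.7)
The plan is to apply the insertion operator $\iota_p$ directly to the two-term definition~\eqref{Ringers:eq:schouten} of $\schouten{\xi,\eta}$ and then to repackage the result as a linear combination of $\schouten{\xi,\eta(p)}$ and $\schouten{\xi(p),\eta}$. The tools are Lemma~\ref{Ringers:thm:interaction-with-vardif}, which trades $(\ldifv[\eta]{b_\alpha})(p)$ for $\ldifv[\eta(p)]{b_\alpha}$ up to a combinatorial factor, the observation (made just before the theorem) that $\iota_p$ commutes with $\delta/\delta q^\alpha$, and a graded Leibniz rule for $\iota_p$ acting on products of $b$-homogeneous factors.

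First I will establish this Leibniz rule. For two densities $f,g$ that are homogeneous of degrees $k_1$ and $k_2$ in the anticommuting variables $b_{\alpha,\sigma}$, the defining formula~\eqref{Ringers:eq:interior-product} splits the sum over the $k_1+k_2$ insertion positions into the first $k_1$ (lying inside $f$) and the last $k_2$ (lying inside $g$). Moving $p$ from a slot inside $f$ across to the canonical rightmost position forces it to pass through the $k_2$ odd variables of $g$ and contributes the sign $(-)^{k_2}$. Collecting factors yields
\begin{align*}
    \iota_p(fg) = \frac{(-)^{k_2}\,k_1}{k_1+k_2}\,\iota_p(f)\,g \;+\; \frac{k_2}{k_1+k_2}\,f\,\iota_p(g).
\end{align*}
I then apply this to $f=\rdifv[\xi]{q^\alpha}$, $g=\ldifv[\eta]{b_\alpha}$ in the first term of~\eqref{Ringers:eq:schouten} (so $k_1=k$, $k_2=\ell-1$) and to $f=\rdifv[\xi]{b_\alpha}$, $g=\ldifv[\eta]{q^\alpha}$ in the second term (with $k_1=k-1$, $k_2=\ell$). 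Substituting $\iota_p\bigl(\ldifv[\eta]{b_\alpha}\bigr) = \frac{\ell}{\ell-1}\ldifv[\eta(p)]{b_\alpha}$ and its right-derivative analogue $\iota_p\bigl(\rdifv[\xi]{b_\alpha}\bigr) = -\frac{k}{k-1}\rdifv[\xi(p)]{b_\alpha}$ from Lemma~\ref{Ringers:thm:interaction-with-vardif}, and commuting $\iota_p$ through $\delta/\delta q^\alpha$ in the remaining factors, the combinatorial prefactors $\frac{\ell-1}{k+\ell-1}\cdot\frac{\ell}{\ell-1}$ and $\frac{k-1}{k+\ell-1}\cdot\frac{k}{k-1}$ collapse to $\frac{\ell}{k+\ell-1}$ and $\frac{k}{k+\ell-1}$ respectively, and what remains precisely reassembles under the integral sign into $\schouten{\xi,\eta(p)}$ and $\schouten{\xi(p),\eta}$.

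The main obstacle will be the careful bookkeeping of signs. The factor $(-)^{k_2}$ from the Leibniz rule, the minus sign produced by Lemma~\ref{Ringers:thm:interaction-with-vardif} for the right variational derivative, and the relative minus sign between the two terms of~\eqref{Ringers:eq:schouten} must conspire to leave $(-)^{\ell-1}$ in front of $\schouten{\xi(p),\eta}$ and $+1$ in front of $\schouten{\xi,\eta(p)}$. Once these signs are organized (paying particular attention to the fact that $(-)^{\ell-1}$ and $(-)^{\ell+1}$ agree), the matching of coefficients is an immediate verification and the recursive identity~\eqref{Ringers:eq:recursive} follows.
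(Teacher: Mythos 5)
Your proposal is correct and follows essentially the same route as the paper: the paper's proof also applies $\iota_p$ to the two terms of the definition~\eqref{Ringers:eq:schouten}, splits the sum over the $k+\ell-1$ insertion slots between the two factors (your graded Leibniz rule for $\iota_p$ is just a clean packaging of the paper's bullet-point bookkeeping of normalizations and of the sign $(-)^{\ell-1}$ from moving $p$ past the odd slots), and then invokes Lemma~\ref{Ringers:thm:interaction-with-vardif} together with the commutation of $\iota_p$ with $\delta/\delta q^\alpha$ to reassemble $\schouten{\xi,\eta(p)}$ and $\schouten{\xi(p),\eta}$ with the stated coefficients. The sign and coefficient checks you outline match the paper's computation exactly.
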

\begin{proof}
	We relate the two sides of the equation by letting $p$ range over the slots as in equation~\eqref{Ringers:eq:interior-product}. In this calculation we will for brevity omit the fiber indices $\alpha$.
	
	Consider the first term of the left hand side, $\left(\rdifv[\xi]{q}\ldifv[\eta]{b}\right)(p)$. If we were to take the sum as in equation~\eqref{Ringers:eq:interior-product}, we would obtain an expression containing $k+\ell-1$ slots; in some cases $p$ is in one of the $\ell-1$ slots of $\lhdifv[\eta]{b}$ and in the other cases it is in one of the $k$ slots of $\rhdifv[\xi]{q}$. All of these terms carry the normalizing factor $1/(k+\ell-1)$. Now we notice the following:
	
	\begin{itemize}
		\item Each term in which $p$ is in a slot coming from $\lhdifv[\eta]{b}$ has a matching term in the expansion of $\rdifv[\xi]{q}\,\iota_p\!\left(\ldifv[\eta]{b}\right)$ according to~\eqref{Ringers:eq:interior-product}, except that there each term would carry a factor $1/(\ell-1)$, because now $p$ only has access to the $\ell-1$ slots of $\lhdifv[\eta]{b}$.
		\item Similarly, each term of the left hand side of~\eqref{Ringers:eq:recursive} in which $p$ is in one of the slots of $\rhdifv[\xi]{q}$ has a matching term in the expansion of $\iota_p\!\left(\rdifv[\xi]{q}\right)\ldifv[\eta]{b}$, but there they carry a factor $1/k$.
		\item Moreover, in that case they also carry the sign $(-)^{\ell-1}$, which comes from the fact that here $p$ had to pass over the $\ell-1$ slots of $\lhdifv[\eta]{b}$.
	\end{itemize}
	Gathering these remarks, we find
	\begin{align*}
		\left(\rdifv[\xi]{q}\,\ldifv[\eta]{b}\right)(p)
		&= \frac{\ell-1}{k+\ell-1}\rdifv[\xi]{q}\,\iota_p\!\left(\ldifv[\eta]{b}\right)
		   + (-)^{\ell-1}\frac{k}{k+\ell-1}\iota_p\!\left(\rdifv[\xi]{q}\right)\ldifv[\eta]{b} \\
		&= \frac{\ell}{k+\ell-1}\rdifv[\xi]{q}\frac{\overleftarrow{\delta\eta}(p)}{\delta b}
		   + (-)^{\ell-1}\frac{k}{k+\ell-1}\frac{\overrightarrow{\delta\xi}(p)}{\delta q}\ldifv[\eta]{b},
	\end{align*}
	where we have used the first equation of Lemma~\ref{Ringers:thm:interaction-with-vardif} in the first term.

	Now we consider the second term of the left hand side of~\eqref{Ringers:eq:recursive}, and use a similar reasoning:
	\begin{align*}
		\left(\rdifv[\xi]{b}\,\ldifv[\eta]{q}\right)(p)
		={}& \frac{\ell}{k+\ell-1}\rdifv[\xi]{b}\,\iota_p\!\left(\ldifv[\eta]{q}\right)
		   + (-)^{\ell}\frac{k-1}{k+\ell-1}\iota_p\!\left(\rdifv[\xi]{b}\right)\ldifv[\eta]{q} \\
		={}& \frac{\ell}{k+\ell-1}\rdifv[\xi]{b}\frac{\overleftarrow{\delta\eta}(p)}{\delta q}
		   +(-)^{\ell+1} \frac{k}{k+\ell-1}\frac{\overrightarrow{\delta\xi}(p)}{\delta b}\ldifv[\eta]{q}.
	\end{align*}
	where now the second equation of Lemma~\ref{Ringers:thm:interaction-with-vardif} has been used. Subtracting the results of these two calculations, we obtain exactly the right hand side of equation~\eqref{Ringers:eq:recursive}.
\end{proof}

Thus, by recursively reducing the degrees 
of the arguments of the bracket, formula~\eqref{Ringers:eq:recursive} expresses the value of the bracket of a $k$-vector and an $\ell$-vector on $k+\ell-1$ covectors. We can interpret it as a second definition of the Schouten bracket, provided that we also set
\begin{align*}
	\llbracket \mathcal{H}, \varphi \rrbracket 
	= \int\evder[q]{\varphi}\mathcal{H}
	= \int\langle\delta\mathcal{H},\varphi\rangle
\end{align*}
for $1$-vectors $\varphi$ and 0-vectors $\mathcal{H} \in \ol{H}^n(\pi)$. Theorem~\ref{Ringers:thm:recursive} then says that this definition is equivalent to Definition~\ref{Ringers:def:bracket}. However, let us notice the following:

\begin{remark}
	There are numerical factors in front of the two terms of the right hand side; these are absent in~\cite{Ringers:topical}. For example, the bracket of a $2$-vector $\xi$ and a 0-vector $\mathcal{H}$ is $\schouten{\mathcal{H},\xi}(p) = 2\xi(\delta\mathcal{H},p)$ according to both Definition~\ref{Ringers:def:bracket} and Theorem~\ref{Ringers:thm:recursive}; note the factor 2.
\end{remark}

\begin{remark}
	Secondly, it is important that the $p$ that is inserted in~\eqref{Ringers:eq:recursive} is \emph{not} an actual covector, but that $p \in \hJet\pi(\wh\pi)$. Otherwise, unwanted terms like $\evder[q]{\varphi}(p)$ occur in the final steps, and equivalence with Definition~\ref{Ringers:def:bracket} is spoiled.
	Thus one takes two multivectors, inserts elements from the horizontal jet space according to the formula, and only plugs in the (derivatives of) actual covectors at the end of the day. This remark is again absent from~\cite{Ringers:topical}.
\end{remark}

\begin{example}\label{Ringers:example-recursive}
	Let us re-calculate Example~\ref{Ringers:example-2-1} using this formula. So, let $\xi = \int bb_x\,\dif x$ and $\eta = \int bx^3q_{xx}\,\dif x$, and let $p^1, p^2 \in \hJet\pi(\wh\pi)$. Then
	\begin{align*}
		\schouten{\xi,\eta}&(p^1,p^2)
		 = \schouten{\xi,\eta}(p^2)(p^1)
		 = \frac12\schouten{\xi, \eta(p^2)}(p^1)
		   + \frac22\schouten{\xi(p^2), \eta}(p^1) \\
		&= -2\cdot\frac12\cdot\schouten{\xi(p^1), \eta(p^2)}
		   + 1\cdot\schouten{\xi(p^2), \eta(p^1)}
		   + 1\cdot\schouten{\xi(p^1,p^2), \eta} \\
		&= \int\Big[(-)^2\,\evder[q]{p^1_x}(p^2x^3q_{xx})
		   - \evder[q]{p^2_x}(p^1x^3q_{xx})
		   + {\textstyle\frac12}\evder[q]{x^3q_{xx}}(p^1p^2_x - p^2p^1_x)\Big]\,\dif x \\
		&= \int\left[x^3p^1_{xxx}p^2 - (p^1\rightleftarrows p^2)\right]\dif x.
	\end{align*}
	(Keeping track of the coefficients and signs is a good exercise.) This is precisely what one gets after evaluating the result of Example~\ref{Ringers:example-2-1} on $p^1$ and $p^2$.
\end{example}

Theorem~\ref{Ringers:thm:recursive} allows us to reduce the Jacobi identity for the Schouten bracket to that of the commutator of one-vectors.

\begin{proposition}\label{Ringers:thm:schouten-jacobi}
	Let $r$, $s$ and $t$ be the degrees of the variational multivectors $\xi$, $\eta$ and $\zeta$, respectively. The Schouten bracket satisfies the graded Jacobi identity\textup{:}
	\begin{align}\label{Ringers:eq:schouten-jacobi} 
		(-)&^{(r-1)(t-1)}\llbracket\xi,\llbracket\eta,\zeta\rrbracket\rrbracket \nn\\
		&+ (-)^{(r-1)(s-1)}\llbracket\eta,\llbracket\zeta,\xi\rrbracket\rrbracket \nn\\
		&+ (-)^{(s-1)(t-1)}\llbracket\zeta,\llbracket\xi,\eta\rrbracket\rrbracket = 0.
	\end{align}
\end{proposition}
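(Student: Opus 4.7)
The plan is to prove the graded Jacobi identity~\eqref{Ringers:eq:schouten-jacobi} by induction on the total degree $r+s+t$, using the recursive formula of Theorem~\ref{Ringers:thm:recursive} to reduce to the case in which all arguments are one-vectors, where the identity follows from the ordinary Jacobi identity for the commutator of evolutionary vector fields on $\varkappa(\pi)$. Both sides of~\eqref{Ringers:eq:schouten-jacobi} are variational $(r+s+t-2)$-vectors, so by multilinearity it suffices to prove the identity after inserting elements $p \in \hJet\pi(\wh\pi)$ one at a time via the insertion operator $\iota_p$.

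For the inductive step, apply the recursive formula~\eqref{Ringers:eq:recursive} to each of the three double brackets, treating $\schouten{\eta,\zeta}$ (respectively $\schouten{\zeta,\xi}$, $\schouten{\xi,\eta}$) as a single $(s+t-1)$-vector; a second application of~\eqref{Ringers:eq:recursive} to the inner insertion $\schouten{\eta,\zeta}(p)$ then distributes $p$ between $\eta$ and $\zeta$. Expanding each of the three cyclic terms in this way produces nine contributions. The key combinatorial observation is that these regroup into three copies of a Jacobi triple at strictly lower total degree, one for each of $(\xi(p),\eta,\zeta)$, $(\xi,\eta(p),\zeta)$, and $(\xi,\eta,\zeta(p))$, each of which vanishes by the inductive hypothesis.

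The base case reduces to multivectors of degree at most one. When all three are $1$-vectors $\varphi_1,\varphi_2,\varphi_3$, Example~\ref{Ringers:example-1-1} identifies the Schouten bracket with $-[\cdot,\cdot]$ acting on evolutionary vector fields, and~\eqref{Ringers:eq:schouten-jacobi} collapses to the usual Jacobi identity on the Lie algebra $\varkappa(\pi)$. Mixed cases involving $0$-vectors are handled using $\schouten{\mathcal{H},\varphi} = \int\evder[q]{\varphi}\mathcal{H}$, together with the fact that evolutionary vector fields commute with total derivatives.

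The main obstacle is precisely the combinatorial bookkeeping in the inductive step: one must verify that the rational prefactors $\ell/(k+\ell-1)$ and the signs $(-)^{\ell-1}$ produced by successive applications of~\eqref{Ringers:eq:recursive} conspire, across the three cyclic terms of~\eqref{Ringers:eq:schouten-jacobi}, to reproduce exactly the sign pattern $(-)^{(r-1)(t-1)}$ (and its cyclic permutations) demanded by the Jacobi identity at one lower degree. This is mechanical but delicate, with particular care required to distinguish left from right variational derivatives throughout.
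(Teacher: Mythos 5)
Your proposal follows essentially the same route as the paper's proof: induction on the total degree via the recursive formula of Theorem~\ref{Ringers:thm:recursive}, inserting $p \in \hJet\pi(\wh\pi)$, expanding each cyclic term twice to get nine contributions, regrouping them into three lower-degree Jacobi triples for $(\xi(p),\eta,\zeta)$, $(\xi,\eta(p),\zeta)$, $(\xi,\eta,\zeta(p))$, and anchoring the base case in degrees at most one via Examples~\ref{Ringers:example-0-1} and~\ref{Ringers:example-1-1}. The sign and prefactor bookkeeping you flag as the main obstacle is exactly what the paper's proof carries out explicitly, and it does conspire as you predict.
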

\begin{proof}
	We proceed by induction using Theorem~\ref{Ringers:thm:recursive}. When the degrees of the three vectors do not exceed 1, the statement follows from the reductions of the Schouten bracket to known structures, as in Examples~\ref{Ringers:example-0-1} and~\ref{Ringers:example-1-1}. Now let the degrees be arbitrary natural numbers. Denote by $I_1$, $I_2$ and $I_3$ the respective terms of the left hand side of~\eqref{Ringers:eq:schouten-jacobi}. Then for any $p \in \hJet\pi(\wh\pi)$ we have that
	\begin{align*}
		I_1(p)
		&= (-)^{(r-1)(t-1)}\big\llbracket\xi,\llbracket\eta,\zeta\rrbracket\big\rrbracket(p)\\
		&= \frac{(-)^{(r-1)(t-1)}}{r+s+t-2}\left( (s+t-1)\big\llbracket\xi,\llbracket\eta,\zeta\rrbracket(p)\big\rrbracket + r(-)^{s+t-2}\big\llbracket\xi(p),\llbracket\eta,\zeta\rrbracket\big\rrbracket\right) \\
		&= \frac{(-)^{(r-1)(t-1)}}{r+s+t-2}\Big( t\big\llbracket\xi,\llbracket\eta,\zeta(p)\rrbracket\big\rrbracket + s(-)^{t-1}\big\llbracket\xi,\llbracket\eta(p),\zeta\rrbracket\big\rrbracket \\
		&\hspace{2.8cm} +\left.r(-)^{s+t-2}\big\llbracket\xi(p),\llbracket\eta,\zeta\rrbracket\big\rrbracket\right).
	\end{align*}
	Similarly,
	\begin{align*}
		I_2(p) &= \frac{(-)^{(r-1)(s-1)}}{r+s+t-2}\Big( r\big\llbracket\eta,\llbracket\zeta,\xi(p)\rrbracket\big\rrbracket + t(-)^{r-1}\big\llbracket\eta,\llbracket\zeta(p),\xi\rrbracket\big\rrbracket \\
		&\hspace{2.8cm} + s(-)^{r+t-2}\big\llbracket\eta(p),\llbracket\zeta,\xi\rrbracket\big\rrbracket\Big), \\
		I_3(p) &= \frac{(-)^{(s-1)(t-1)}}{r+s+t-2}\left( s\big\llbracket\zeta,\llbracket\xi,\eta(p)\rrbracket\big\rrbracket + r(-)^{s-1}\big\llbracket\zeta,\llbracket\xi(p),\eta\rrbracket\big\rrbracket\right. \\
		&\hspace{2.8cm} +\left.t(-)^{r+s-2}\big\llbracket\zeta(p),\llbracket\xi,\eta\rrbracket\big\rrbracket\right).
	\end{align*}
	For notational convenience, let us set $I_1(p) + I_2(p) + I_3(p) =: I/(r+s+t-2)$. Next we rearrange the terms in $I$:
	\begin{align*}
		I ={}&
		(-)^{r-1}t\Big\{
		(-)^{(r-1)(t-2)}\big\llbracket\xi,\llbracket\eta,\zeta(p)\rrbracket\big\rrbracket
		+ (-)^{(r-1)(s-1)}\big\llbracket\eta,\llbracket\zeta(p),\xi\rrbracket\big\rrbracket \\
		&+ (-)^{(s-1)(t-2)}\big\llbracket\zeta(p),\llbracket\xi,\eta\rrbracket\big\rrbracket
		\Big\}
		+ (-)^{t-1}s\Big\{
		(-)^{(r-1)(t-1)}\big\llbracket\xi,\llbracket\eta(p),\zeta\rrbracket\big\rrbracket \\
		&+ (-)^{(r-1)(s-2)}\big\llbracket\eta(p),\llbracket\zeta,\xi\rrbracket\big\rrbracket 
		+ (-)^{(s-2)(t-1)}\big\llbracket\zeta,\llbracket\xi,\eta(p)\rrbracket\big\rrbracket
		\Big\} \\
		&+ (-)^{s-1}r\Big\{
		(-)^{(r-2)(t-1)}\big\llbracket\xi(p),\llbracket\eta,\zeta\rrbracket\big\rrbracket 
		+ (-)^{(r-2)(s-1)}\big\llbracket\eta,\llbracket\zeta,\xi(p)\rrbracket\big\rrbracket \\ 
		&+ (-)^{(s-1)(t-1)}\big\llbracket\zeta,\llbracket\xi(p),\eta\rrbracket\big\rrbracket
		\Big\},
	\end{align*}
	i.e., we obtain the Jacobi identity for $\xi$, $\eta$ and $\zeta(p)$; for $\xi$, $\eta(p)$ and $\zeta$; and for $\xi(p)$, $\eta$ and $\zeta$ (each times some unimportant factors).
	Thus we see that if we know that the identity holds for $(r-1,s,t)$, $(r,s-1,t)$ and $(r,s,t-1)$, then it holds for $(r,s,t)$.
\end{proof}

\subsection{Graded vector fields}
\begin{proposition}\label{Ringers:thm:graded-vector-fields}
	If $\xi$ and $\eta$ are $k$ and $\ell$-vectors respectively, then their Schouten bracket is equal to
	\begin{align}
		\llbracket \xi, \eta \rrbracket = \int Q^\xi(\eta)
		 = \int(\xi)\overleftarrow{Q}^\eta,
	\end{align}
	where for any $k$-vector $\xi$, the graded evolutionary vector field $Q^\xi$ is defined by
	\begin{align}\label{Ringers:eq:Q}
		Q^\xi := \evder[q]{-\rhdifv[\xi]{b}} + \evder[b]{\rhdifv[\xi]{q}}.
	\end{align}
\end{proposition}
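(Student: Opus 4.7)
The plan is to unwind the definition of $Q^\xi(\eta)$ in local coordinates and then apply integration by parts until the resulting expression matches~\eqref{Ringers:eq:schouten}. First, I would write out
\begin{align*}
Q^\xi(\eta) = \sum_{\sigma,\alpha}\left[-D_\sigma\!\left(\rhdifv[\xi]{b_\alpha}\right)\difp[\eta]{q^\alpha_\sigma} + D_\sigma\!\left(\rhdifv[\xi]{q^\alpha}\right)\difp[\eta]{b_{\alpha,\sigma}}\right],
\end{align*}
integrate, and then push each $D_\sigma$ off the $\xi$-derivative onto the partial $\eta$-derivative via $\int D_\sigma(f)g \equiv (-)^{|\sigma|}\int f\,D_\sigma(g)$ (modulo $\hdif$-exact terms, which is automatic under $\int$). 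For each term the operation $\sum_\sigma (-)^{|\sigma|}D_\sigma \circ \partial/\partial q^\alpha_\sigma$, respectively $\sum_\sigma (-)^{|\sigma|}D_\sigma \circ \partial/\partial b_{\alpha,\sigma}$, reassembles into the variational derivative of $\eta$, and with the conventions of Definition~\ref{Ringers:def:bracket} (pushing $\delta b_\alpha$ to the right so as to obtain $\ldifv[\eta]{b_\alpha}$) one arrives at
\begin{align*}
\int Q^\xi(\eta) = \int\left[\rhdifv[\xi]{q^\alpha}\ldifv[\eta]{b_\alpha} - \rhdifv[\xi]{b_\alpha}\ldifv[\eta]{q^\alpha}\right] = \schouten{\xi,\eta},
\end{align*}
which is exactly the Schouten bracket.

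For the second equality $\schouten{\xi,\eta}=\int(\xi)\overleftarrow{Q}^\eta$ I would run the mirror calculation, expanding the right action of $Q^\eta$ on $\xi$ and integrating by parts so that total derivatives land on $\xi$'s partials, thereby producing the right variational derivatives of $\xi$ that already appear in~\eqref{Ringers:eq:schouten}. Alternatively, once the first equality is in hand, one can deduce the second from it by combining the graded skew-symmetry of the bracket (immediate from~\eqref{Ringers:eq:schouten}) with the general relation $X(f)=(-)^{|X|\,|f|}f\overleftarrow{X}$ between the left and right actions of a homogeneous graded vector field $X$; applied with $X=Q^\eta$ of parity $\ell-1$ and $f=\xi$ of parity $k$, this converts $\int Q^\eta(\xi)=\schouten{\eta,\xi}$ into $\int(\xi)\overleftarrow{Q}^\eta=\schouten{\xi,\eta}$ with the appropriate sign.

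The main obstacle is sign bookkeeping. Because $b$ is odd, the left and right variational derivatives $\ldifv[\eta]{b_\alpha}$ and $\rhdifv[\eta]{b_\alpha}$ differ by $(-)^{\ell-1}$; the factor $\rhdifv[\xi]{b_\alpha}$ carries parity $k-1$ and picks up signs whenever it is commuted past the odd content of $\eta$; and every integration by parts over a graded integrand brings in an additional parity sign on top of the combinatorial $(-)^{|\sigma|}$. One must verify that all of these signs collapse so that the explicit minus inside $\evder[q]{-\rhdifv[\xi]{b}}$ is exactly the minus appearing in front of $\rhdifv[\xi]{b_\alpha}\ldifv[\eta]{q^\alpha}$ in~\eqref{Ringers:eq:schouten}, and so that the two expressions $\int Q^\xi(\eta)$ and $\int(\xi)\overleftarrow{Q}^\eta$ coincide with $\schouten{\xi,\eta}$ on the nose rather than merely up to an overall $\pm 1$.
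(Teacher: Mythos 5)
Your proposal is correct and follows essentially the same route as the paper: expand $Q^\xi(\eta)$ in coordinates, integrate by parts so that $\sum_\sigma(-)^{\sigma}D_\sigma\circ\partial/\partial b_{\alpha,\sigma}$ (resp.\ $\partial/\partial q^\alpha_\sigma$) reassembles into the variational derivatives appearing in~\eqref{Ringers:eq:schouten}. The paper's proof is terser --- it carries out only the $\evder[b]{\rhdifv[\xi]{q}}$ term and leaves the other term and the right-action identity $\int(\xi)\overleftarrow{Q}^\eta$ to the reader --- so your extra attention to those points and to the sign bookkeeping is a welcome elaboration rather than a deviation.
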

\begin{proof}
	This is readily seen from the equalities
	\begin{align*}
		\int\evder[b]{\rhdifv[\xi]{q}}(\eta)
		&= \int\sum_{\alpha,\sigma}\dtotal\sigma\left(\rdifv[\xi]{q^\alpha}\right)\difp[\eta]{b_{\alpha,\sigma}} 
		= \int\sum_{\alpha,\sigma}\rdifv[\xi]{q^\alpha}(-)^\sigma \dtotal\sigma\difp[\eta]{b_{\alpha,\sigma}} \\
		&= \int\sum_\alpha \rdifv[\xi]{q^\alpha} \ldifv[\eta]{b_\alpha},
	\end{align*}
	which is the first term of the Schouten bracket $\schouten{\xi,\eta}$. The second term of~\eqref{Ringers:eq:Q} is done similarly.
\end{proof}

As a consequence of Proposition~\ref{Ringers:thm:graded-vector-fields}, the Schouten bracket is a derivation: if $\eta$ is a product of $k$ factors, then $\llbracket\xi, \eta\rrbracket = \int Q^\xi(\eta)$ has $k$ terms, where in the $i$-th term, $Q^\xi$ acts  on the $i$-th factor while leaving the others alone. However, while the bracket is a derivation in both of its arguments separately, it is \emph{not} a bi-derivation (i.e., a derivation in both arguments simultaneously), as in equation~\eqref{Ringers:eq:bi-derivation}. To see why this is so, take a multivector $\eta$ and let us suppose for simplicity that it has a density that consists of a single term containing $\ell$ coordinates, which can be either $q$'s or $b$'s: $\eta = \prod_{i=1}^\ell a_i$, for a set of letters $a_i$. Then the $i$-th term of $\schouten{\xi,\eta} = \int Q^\xi(\eta)$ is a sign which is not important for the present purpose, times $a_1\cdots Q^\xi(a_i)\cdots a_\ell$.

Now suppose that $\xi = \prod_{j=1}^k c_j$ for some set of letters $c_j$, and note that $Q^\xi(a_i) = (\xi)\overleftarrow{Q}^{a_i} + \text{trivial terms}$. Let us call the trivial term $\omega$ for the moment. Then we see that 
\begin{align*}
	a_1\cdots Q^\xi(a_i)\cdots a_\ell
	&= a_1\cdots (\xi)\overleftarrow{Q}^{a_i}\cdots a_\ell + a_1\cdots\omega\cdots a_\ell
\end{align*}
Here the first term expands to what it should be in order for the bracket to be a bi-derivation, namely a sum consisting of terms of the form
\[
	a_1\cdots c_1\cdots(c_j)\overleftarrow{Q}^{a_i}\cdots c_k\cdots a_\ell
\]
times possible minus signs. The second term, however, is generally no longer trivial, so that it does not vanish. Therefore the bracket is not in general a bi-derivation.

\begin{theorem}\label{Ringers:thm:commutator}
	The Schouten bracket is related to the graded commutator of graded vector fields as follows\textup{:}
	\begin{align}\label{Ringers:eq:commutator}
		\int Q^{\llbracket\xi, \eta\rrbracket}f = \int[Q^\xi,Q^\eta]f,
	\end{align}
	for any smooth function $f$ on the horizontal jet space $\hJet\pi(\wh\pi)$.
\end{theorem}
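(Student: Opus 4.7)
The plan is to reduce the identity to the graded Jacobi identity of Proposition~\ref{Ringers:thm:schouten-jacobi} via a double application of Proposition~\ref{Ringers:thm:graded-vector-fields}. First, by linearity in $f$ I may assume that $f$ is homogeneous of some degree $m$ in the odd coordinates $b_{\alpha,\sigma}$, so that $\zeta := \int f$ is an honest $m$-vector; the general case then follows by summing over homogeneous components. Since $Q^\eta$ is a graded evolutionary derivation that raises the $b$-degree uniformly by $\ell-1$ (both summands in~\eqref{Ringers:eq:Q} do this, by inspection of the $b$-content of $\rhdifv[\eta]{b}$ and $\rhdifv[\eta]{q}$), the density $Q^\eta f$ itself represents a multivector of degree $\ell + m - 1$, and likewise for $Q^\xi f$. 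Applying Proposition~\ref{Ringers:thm:graded-vector-fields} twice therefore gives
\begin{align*}
    \int Q^\xi Q^\eta f = \schouten{\xi, \schouten{\eta, \zeta}}, \qquad
    \int Q^\eta Q^\xi f = \schouten{\eta, \schouten{\xi, \zeta}}, \qquad
    \int Q^{\schouten{\xi,\eta}} f = \schouten{\schouten{\xi,\eta}, \zeta}.
\end{align*}

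Interpreting $[Q^\xi, Q^\eta]$ as the graded commutator $Q^\xi Q^\eta - (-)^{(k-1)(\ell-1)} Q^\eta Q^\xi$ (appropriate since $Q^\xi$ has parity $k-1$), the desired identity reduces to
\begin{align*}
    \schouten{\xi, \schouten{\eta, \zeta}} - (-)^{(k-1)(\ell-1)}\schouten{\eta, \schouten{\xi, \zeta}} = \schouten{\schouten{\xi,\eta}, \zeta}.
\end{align*}
This is precisely the content of Proposition~\ref{Ringers:thm:schouten-jacobi} for the triple $(\xi, \eta, \zeta)$, once both $\schouten{\eta, \schouten{\zeta, \xi}}$ and $\schouten{\schouten{\xi,\eta},\zeta}$ are rewritten using the graded antisymmetry $\schouten{\alpha,\beta} = -(-)^{(\deg\alpha-1)(\deg\beta-1)}\schouten{\beta,\alpha}$ of the bracket, which itself is immediate from swapping $\xi \leftrightarrow \eta$ in the defining formula~\eqref{Ringers:eq:schouten} and tracking the resulting arrow reversals on the variational derivatives.

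The argument is thus almost entirely a translation, and the only real obstacle is the sign bookkeeping: one must verify that the graded-commutator sign $(-)^{(k-1)(\ell-1)}$, the antisymmetry signs picked up when moving $\zeta$ into the inner and outer slots, and the three cyclic exponents in~\eqref{Ringers:eq:schouten-jacobi} all cancel consistently. Since every exponent depends only on the three shifted parities $k-1, \ell-1, m-1$ in the symmetric manner characteristic of an odd Poisson bracket, the calculation is routine but demands care. Alternatively, one may invoke the general principle that for any odd Poisson bracket the assignment of a Hamiltonian graded derivation to an element is a homomorphism of graded Lie algebras modulo the kernel of $\int\hdif$, a fact whose proof is precisely the graded Jacobi identity just invoked.
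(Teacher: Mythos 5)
Your proof is correct and follows essentially the same route as the paper's: expand the graded commutator, identify $\int Q^\xi Q^\eta f$ with $\schouten{\xi,\schouten{\eta,\int f}}$ via Proposition~\ref{Ringers:thm:graded-vector-fields}, and invoke the graded Jacobi identity of Proposition~\ref{Ringers:thm:schouten-jacobi}. The paper's version is terser --- it does not spell out the reduction to homogeneous $f$, the observation that $Q^\eta f$ is again a multivector density, or the antisymmetry bookkeeping needed to pass from the cyclic to the Leibniz form of the Jacobi identity --- but the content is the same.
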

\begin{proof}
	From the definition of the graded commutator and equation~\eqref{Ringers:eq:Q} we infer that
	\begin{align*}
		\int [Q^\xi,Q^\eta]f
		&= \int Q^\xi(Q^\eta(f)) - (-)^{(k-1)(\ell-1)}\int Q^\eta(Q^\xi(f)) \\
		&= \big\llbracket\xi,\llbracket\eta,f\rrbracket\big\rrbracket
		   - (-)^{(k-1)(\ell-1)}\big\llbracket\eta,\llbracket\xi,f\rrbracket\big\rrbracket
		 = \big\llbracket\llbracket\xi,\eta\rrbracket,f\big\rrbracket \\
		&= \int Q^{\llbracket\xi,\eta\rrbracket}f.
	\end{align*}
	where we used the Jacobi identity in the third line.
\end{proof}

This provides a third way of defining the Schouten bracket, equivalent to the previous two. Since the only fact that is used in this proof is that the Schouten bracket satisfies the graded Jacobi identity (Proposition~\ref{Ringers:thm:schouten-jacobi}), Theorem~\ref{Ringers:thm:commutator} is actually equivalent to the Jacobi identity for the Schouten bracket. It is also possible to prove Theorem~\ref{Ringers:thm:commutator} directly (see~\cite[p.~84]{Ringers:ArthemyBook}, by inspecting both sides of equation~\eqref{Ringers:eq:commutator}; in that case the Jacobi identity may be proved as a consequence of Theorem~\ref{Ringers:thm:commutator}.

As a bonus, we see that if $P$ is a Poisson bi-vector, i.e., $\schouten{P,P} = 0$, then $Q^P$ is a differential, $(Q^P)^2 = 0$. This gives rise to the Poisson(-Lichnerowicz) cohomology groups $\text{H}_P^k$.

\section{Conclusion}\label{Ringers:sec:conclusion}
The research into the generalization of the Schouten bracket to jet spaces has historically been split in two directions. In the Poisson formalism, it is related to notions such as Poisson cohomology, integrability and the Yang-Baxter equation; while in the quantization of gauge system it is used in the BV-formalism to create a differential $D = \schouten{\Omega,\cdot\,}$, also leading to cohomology groups. Although the definition of the bracket on usual manifolds by the formula that expresses it as a bi-derivation no longer works, there are several other ways of defining the bracket, which are equivalent if care is taken.

We finally recall that these definitions 
of the Schouten bracket also exist and remain coinciding in the $\mathbb{Z}_2$-graded setup $\Jet\big((\pi_0|\pi_1)\big) \to M^{n_0|n_1}$, and in the setup of purely non-commutative manifolds and non-commutative bundles (see~\cite{Ringers:Kontsevich}, \cite{Ringers:Olver-Sokolov} and lastly~\cite{Ringers:ArthemyBook}, which contains details and discussion, and generalizes the topic of this paper to the non-commutative world).

\subsection*{Acknowledgements}
Both authors thank the Organizing committee of this workshop for partial support and a warm atmosphere during the meeting. The research of the first author was partially supported by NWO VENI grant 639.031.623 (Utrecht) and JBI RUG project 103511 (Groningen).

\LastPageEnding\end{document}